\newtheorem{defi}{Definition}[section]
\newtheorem{thm}{Theorem}[section]
\newtheorem{prop}{Proposition}[section]
\newtheorem{cor}{Corollary}[section]
\newtheorem{rem}{Remark}[section]
\title[Extreme coherent distributions with no atoms]{On the existence of extreme coherent \\ distributions with no atoms}
\author{Stanis\l{}aw Cichomski}
\address{Department of Mathematics, Informatics and Mechanics\\
 University of Warsaw\\
Banacha 2, 02-097 Warsaw\\
Poland}
\author{Adam Os\k{e}kowski}
\address{Department of Mathematics, Informatics and Mechanics\\
 University of Warsaw\\
Banacha 2, 02-097 Warsaw\\
Poland}
\numberwithin{equation}{section}
\begin{document}

\begin{abstract} The paper is devoted to the study of extremal points of $\mathcal{C}$, the family of all two-variate coherent distributions on $[0,1]^2$. It is well-known that the set $\mathcal{C}$ is convex and weak$^*$ compact, and all extreme points of $\mathcal{C}$
must be supported on sets of Lebesgue measure zero. Conversely, examples of extreme coherent measures, with a finite or countable infinite number of atoms, have been successfully constructed in the literature. The main purpose of this article is to bridge the natural gap between those two results: we provide an example of extreme coherent distribution with an uncountable support and with no atoms. Our argument is based on classical tools and ideas from the dynamical systems theory. This unexpected connection can be regarded as an independent contribution of the paper.
\end{abstract}

\maketitle

\section{Introduction}

The concept of coherent random variables and distributions appeared for the first time in the mid-nineties, in the paper by Dawid et al. \cite{C1}.  Within a Bayesian inference model, they investigated practical procedures for combining distinct conditional probabilities of a fixed uncertain event into a single and reliable forecast. Honoring their original definition, we say that a multivariate random vector $(X_1, X_2, \dots, X_n )$ defined on some arbitrary probability space $(\Omega, \mathcal{F}, \mathbb{P})$ is coherent, if there is an event $E\in \mathcal{F}$ and sequence of sub-$\sigma$-fields $\mathcal{G}_1, \mathcal{G}_2, \dots, \mathcal{G}_n \subset \mathcal{F}$ such that
\begin{equation} \label{experts} X_i=\mathbb{P}(E|\mathcal{G}_i) \ \ \ \ \text{for all} \ \ \ \ i=1,2,\dots n, \ \ \ \  \text{almost surely}.\end{equation}
The motivating interpretation is as follows: suppose that a group of $n$ experts provides their personal estimates on the likelihood of an uncertain event $E$ of our interest. Moreover, assume that the entire knowledge of  $j$-th specialist ($j=1,2,\dots,n$) is strictly restricted to  some 
personalized source of information (sub-$\sigma$-algebra) $\mathcal{G}_j$  and his predictions are solely based on this limited knowledge.
Then, in such an idealized model, the obtained sequence $(X_1, X_2, \dots, X_n)$ of experts' opinions on the odds of event $E$, clearly satisfies (\ref{experts}).
In such a case, the joint distribution of vector $(X_i)_{i=1}^n$ on the unit cube $[0,1]^n$ is also called coherent. 
In what follows, we write $m \in \mathcal{C}_n$ to indicate that a probability measure $m$ is the distribution of some $n$-variate coherent vector  $(X_i)_{i=1}^n$. 

After two decades of hiatus, the mathematical study of coherence has been reinstated by the influential paper of  Burdzy and Pitman, see \cite{pitman}.
For the latest advancements in the study of multivariate coherent distributions, see e.g. \cite{EJP, contra2,  max}.  Modernly, the notion of coherent distributions is well suited to the analysis of information diverisity in social networks and stochastic  modelling of other data-driven technologies. Quite unexpectedly, it fits efficiently into the rapidly growing framework of statistical learning, game theory and  microeconomics. More specifically, we refer the interested reader to the works on optimal combining \cite{C2, C4, C3}, posterior feasibility \cite{B4, B1, kDoob}, Bayesian rationality \cite{BR1, BR2, BR3}, maximal discrepancy \cite{contra, EJP,  BPC} and Bayesian persuasion \cite{Arieli, B2, Kamenica}.

\smallskip
Just as importantly as its applications in statistics and microeconomics, the notion of coherent distributions is a valuable mathematical concept in its own right.
Its distinctive feature is that it appears at the intersection of two profound classical fields: probability theory and discrete mathematics.
On the purely probabilistic side, coherent distributions are closely related to permutons and copulas \cite{asymp, EDSM, zhu}, study of probabilistic measures with given marginals \cite{B2, Gutmann, Strassen}, or theory of martingales indexed by partially ordered sets \cite{pitman, kDoob, max}.
From the combinatorial perspective, coherent distributions are strongly connected with graph theory \cite{mastersthesis, BPC, tao},
mathematical tomography \cite{GaleRyser, Gutmann, PPinfo} and combinatorial matrix theory \cite{brualdi, mastersthesis, ryser}.
Our work here reveals an unexpected connection with another field of mathematics: below, we will exhibit the link between the geometric analysis of coherent distributions with the theory of dynamical systems.

Although most of the existing literature deals with some various optimization problems, ongoing research is strictly constricted by the fact that  geometric structure of coherent distributions is very complex and not entirely explored. Let us briefly recall what is known. From now on, we will drop the subscript '$2$' and write  $\mathcal{C}$ instead of $\mathcal{C}_2$ for the foundational class of all  two-dimensional coherent distributions on $[0,1]^2$.
As demonstrated in \cite{pitman}, the family $\mathcal{C}$ is  a convex and weak$^*$ compact subset of all probability distributions on $[0,1]^2$.
Studying $\mathrm{ext}(\mathcal{C})$, the set  of extreme points of $\mathcal{C}$, is consequently the primary challenge in this field.
As shown independently in \cite{B1, zhu}, there exist extreme coherent measures with arbitrary large
or even countable infinite number of atoms. For a useful comparison,  note that martingales $(M_1, M_2)$, whose distributions are extremal, are always supported on one or two points, see \cite{extmart}. Next, as verified in \cite{B1}, elements of $\mathrm{ext}(\mathcal{C})$ are always supported on sets of Lebesgue measure zero. Lastly, in \cite{asymp}, a complete characterisation of $\mathrm{ext}(\mathcal{C})$ was developed, but this did not result in any easily verifiable criterion.
 
\begin{figure}[H] 
\centering
\includegraphics[width=45mm]{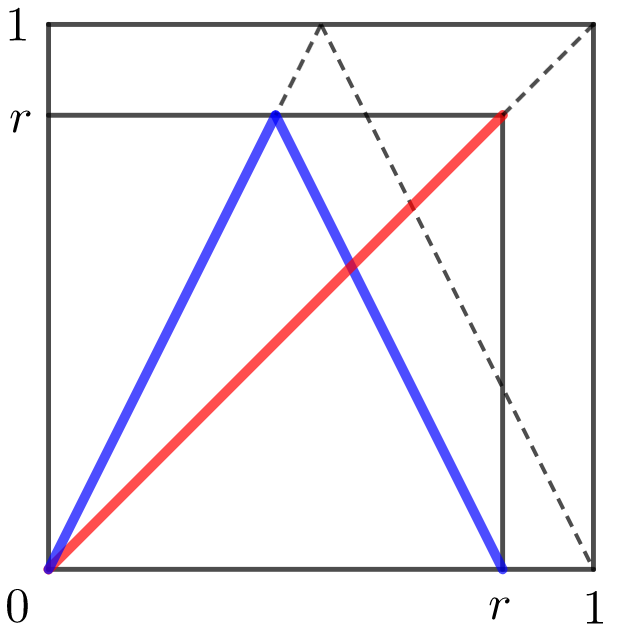}
\vspace{0.2cm}
\caption{Red color stands for $\Delta_r$, the support of measure $\mu_r$. Blue color represents  $\Gamma_r$, the support of measure $\nu_r$.
The union $\Delta_r\cup \Gamma_r$ is the support of an extreme coherent distribution $m_r$.}
\label{rys1} 
\end{figure}

The main aim of this article is to tackle the natural and intriguing problem formulated in \cite{B1, asymp, zhu}: does there exist any non-atomic extreme points of $\mathcal{C}$? We will answer this question in the affirmative. Beforehand, we need to develop some auxiliary notation. For $r\in(0,1)$, introduce $t_r:[0,r]\rightarrow [0,r]$ by 
$$t_r(x)=2\min\{x, r-x\}$$
and distinguish the two special sets
$$\Delta_r=\left\{(x,x): x\in [0,r]\right\} \ \ \ \ \text{and} \ \ \ \ \Gamma_r= \left\{(x,t_r(x)): x\in [0,r]\right\}.$$ 
Next, consider nonnegative Borel measures $\mu_r$ and $\nu_r$, supported on $\Delta_r$ and $\Gamma_r$, respectively, which are defined by
\begin{equation} \label{mur-defi} \mu_r\left(\left\{(s,s): 0\le s \le x \right\} \right) \ \ = \ \ c_r^{-1}\int_{0}^x\frac{s}{1-s} \ \mathrm{d}s,\end{equation}
\begin{equation} \label{nur-defi} \nu_r\left(\left\{\left(s,t_r(s)\right): 0\le s \le x \right\} \right) \ \ = \ \ c_r^{-1}\int_{0}^x 1 \ \mathrm{d}s,\end{equation}
for $x\in [0,r]$, where
\begin{equation} \label{c-def} c_r \ = \ \int_0^r\left(\frac{s}{1-s}+1\right)  \mathrm{d}s \ = \ -\log(1-r) \ <\infty. \end{equation}
 See Figure \ref{rys1}. Finally, let $m_r=\mu_r+\nu_r$. Our main result is the following.

\begin{thm}\label{main_theorem}For every $r\in (0,1)$, the measure $m_r$ is an extreme point of $\mathcal{C}$. \end{thm}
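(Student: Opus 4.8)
The plan is to exploit the standard reformulation of coherence: a probability measure $m$ on $[0,1]^2$ lies in $\mathcal{C}$ if and only if there is a measurable map $h\colon[0,1]^2\to[0,1]$, called a \emph{witness}, with $\mathbb{E}_m[h\mid X_1]=X_1$ and $\mathbb{E}_m[h\mid X_2]=X_2$, where $X_1,X_2$ are the coordinate projections. The key structural remark is that $\Gamma_r$ is the graph of $t_r$, the tent map on $[0,r]$: it is two-to-one with slopes $\pm2$, metrically conjugate via $x\mapsto x/r$ to the classical tent map on $[0,1]$, hence it preserves normalized Lebesgue measure on $[0,r]$ and is ergodic (indeed exact). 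Consequently the only $L^1([0,r])$ fixed points of its transfer (Perron--Frobenius) operator $(P\varphi)(y)=\tfrac12\bigl(\varphi(y/2)+\varphi(r-y/2)\bigr)$ are the a.e.\ constant functions. This ergodic rigidity is the engine of the whole argument.

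\emph{Coherence and uniqueness of the witness.} Disintegrating $m_r$ along $X_1$ one reads off from \eqref{mur-defi}--\eqref{nur-defi} that the conditional law given $X_1=x$ puts mass $x$ on $(x,x)\in\Delta_r$ and mass $1-x$ on $(x,t_r(x))\in\Gamma_r$; disintegrating along $X_2$, the law given $X_2=y$ puts mass $y$ on $(y,y)$ and mass $\tfrac{1-y}{2}$ on each of the two preimages $(y/2,y),(r-y/2,y)\in\Gamma_r$. Taking $h=\mathbf 1_{\Delta_r}$ then gives $\mathbb{E}_{m_r}[h\mid X_1]=X_1$ and $\mathbb{E}_{m_r}[h\mid X_2]=X_2$, so $m_r\in\mathcal{C}$. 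Repeating the computation with a general witness, written $A(x)=h(x,x)$ on $\Delta_r$ and $B(x)=h(x,t_r(x))$ on $\Gamma_r$, the two identities reduce to $\tfrac{x}{1-x}(A(x)-x)=x-B(x)$ and $B=PB$; ergodicity forces $B\equiv\beta_0$ constant, substitution gives $A(x)=1-\tfrac{(1-x)\beta_0}{x}$, and nonnegativity as $x\to0^+$ forces $\beta_0=0$, hence $A\equiv1$. Thus $\mathbf 1_{\Delta_r}$ is the unique witness for $m_r$.

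\emph{Extremality.} Let $m_r=\tfrac12(p+q)$ with $p,q\in\mathcal{C}$. Since $p,q\le 2m_r$, both are absolutely continuous with respect to $m_r$; write $p=f m_r$, $q=(2-f)m_r$ with $0\le f\le2$ and $\int f\,\mathrm{d}m_r=1$. If $h_p,h_q$ are witnesses for $p,q$, then testing $\mathbb{E}_p[h_p\mid X_i]=X_i$ and $\mathbb{E}_q[h_q\mid X_i]=X_i$ against functions of $X_i$ and adding shows that $g:=\tfrac12\bigl(h_pf+h_q(2-f)\bigr)$ is a witness for $m_r$; it is $[0,1]$-valued since $h_pf+h_q(2-f)\le f+(2-f)=2$. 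By the previous paragraph $g=\mathbf 1_{\Delta_r}$ $m_r$-a.e., and comparing the nonnegative summands — on $\Delta_r$ one gets $(1-h_p)f+(1-h_q)(2-f)=0$, on $\Gamma_r$ one gets $h_pf+h_q(2-f)=0$ — forces $h_p=\mathbf 1_{\Delta_r}$ wherever $f>0$, i.e.\ $p$-a.e. Feeding the witness $\mathbf 1_{\Delta_r}$ into the coherence identities for $p$, with the disintegration weights of $p$ reweighted by $f_1=f|_{\Delta_r}$ and $f_2=f|_{\Gamma_r}$, the $X_1$-identity collapses to $f_1=f_2=:F$ a.e.\ on $[0,r]$, and the $X_2$-identity collapses to $F=PF$. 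Ergodicity makes $F$ a.e.\ constant, and $\int f\,\mathrm{d}m_r=1$ together with \eqref{c-def} pins $F\equiv1$; hence $p=m_r=q$, so $m_r\in\mathrm{ext}(\mathcal{C})$.

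The crux — and the step I expect to demand the most care — is recognizing the functional equations $B=PB$ and $F=PF$ as transfer-operator fixed-point equations and invoking ergodicity of the tent map to solve them; the supporting computations that need attention are the two disintegrations of $m_r$, where the piecewise-linear, two-to-one nature of $t_r$ enters, and the boundary analysis at $x\to0^+$ that removes the otherwise-undetermined constant.
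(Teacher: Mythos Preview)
Your argument is correct and shares the paper's core mechanism---the ergodicity of the (rescaled) tent map $t_r$, equivalently the uniqueness of its absolutely continuous invariant measure---but the packaging is genuinely different. The paper works in the representation framework $(\mu,\nu)\in\mathcal{R}$ of Definition~\ref{R-set}, and rather than proving extremality directly it invokes the black-box characterization Theorem~\ref{ext(C)-char} from \cite{asymp}, reducing the task to \emph{uniqueness} and \emph{minimality} of the pair $(\mu_r,\nu_r)$; each of these is then handled by a measure-theoretic argument (Jordan decomposition for uniqueness, Radon--Nikodym densities for minimality) culminating in the observation that a certain marginal is $t_r$-invariant and absolutely continuous with respect to Lebesgue, hence uniform. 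Your witness formulation is the same content in dual form---$h=d\mu/dm$---and your functional equations $B=PB$, $F=PF$ for the Perron--Frobenius operator are exactly the density-side translation of the paper's $t_r$-invariance statements. What you do differently is bypass Theorem~\ref{ext(C)-char} entirely: the convex-combination argument with $g=\tfrac12(h_pf+h_q(2-f))$ gives extremality directly, which makes your proof more self-contained. The paper's route, in exchange, separates the two obstructions (non-unique witness versus non-trivial sub-representation) cleanly and would generalize more readily to settings where one already has the abstract characterization in hand. Both proofs use the same boundary argument at $x\to0^+$ (your $\beta_0=0$ step, the paper's contradiction of \eqref{U(r)<=mu-x}) to rule out the spurious constant in the uniqueness half.
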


For notational simplicity, fix any $r\in(0,1)$. Although the choice of $r$ is arbitrary, we shall not highlight it any further. 
The main tools and short outline of the proof are provided in the next section. 

\section{Main tools and outline of the proof}

A few more results and definitions will be needed in order to continue the discussion. For the sake of clarity, we split the material into a few separate parts.

\subsection{A structure of coherent distributions} For a Borel measure $\pi$  on $[0,1]^2$, we will write
$\pi^x$ and $\pi^y$ for the corresponding marginal measures of $\pi$ on $[0,1]$. In other words, we have 
\begin{center}$\pi^x(A) = \mu(A\times [0,1])$ \ \ \ \ and  \ \ \ \  $\pi^y(B)=\mu([0,1]\times B)$\end{center}
 for all Borel subsets $A,B\in \mathcal{B}([0,1])$. We begin by recalling the definition of the family $\mathcal{R}$, as introduced in \cite{asymp}. This family is closely related to $\mathcal{C}$ and crucial for our analysis.

\begin{defi}\label{R-set}
Let $\mathcal{R}$ be the collection of all ordered pairs $(\mu, \nu)$ of nonnegative Borel measures on $[0,1]^2$, satisfying
$$\int_{A}(1-x) \ \mathrm{d}\mu^x  \  \ = \  \ \int_{A} x \ \mathrm{d}\nu^x,$$
and
$$\int_{B}(1-y)  \ \mathrm{d}\mu^y  \ \ =  \ \ \int_{B} y \ \mathrm{d}\nu^y,$$
for any Borel subsets $A,B \in \mathcal{B}([0,1])$. 
\end{defi}

Next, we have the following fact -- see \cite{asymp} or \cite{B1} (with a slightly  different formulation).

\begin{prop} \label{m=mu+nu} Let $m$ be a probability measure on $[0,1]^2$. Then $m$ is coherent if and only if there exists $(\mu, \nu)\in \mathcal{R}$ such that $m=\mu+\nu$. 
\end{prop}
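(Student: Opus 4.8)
The plan is to prove both implications by exploiting the probabilistic meaning of the decomposition $m=\mu+\nu$: the summand $\mu$ should record the mass of $(X_1,X_2)$ falling on the event $E$, while $\nu$ records the complementary mass on $E^c$. The two defining relations of $\mathcal{R}$ will turn out to be nothing more than the statement that $X_i=\mathbb{P}(E|\mathcal{G}_i)$, rewritten in terms of the marginals $\mu^x,\nu^x,\mu^y,\nu^y$, one relation for each coordinate.

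For the forward implication, suppose $m$ is the law of a coherent vector $(X_1,X_2)$, so that $X_i=\mathbb{P}(E|\mathcal{G}_i)$ almost surely for some event $E$ and sub-$\sigma$-fields $\mathcal{G}_1,\mathcal{G}_2$. I would set
$$\mu(\,\cdot\,)=\mathbb{E}\big[\mathbf{1}_E\,\mathbf{1}_{(X_1,X_2)\in\,\cdot}\big],\qquad \nu(\,\cdot\,)=\mathbb{E}\big[\mathbf{1}_{E^c}\,\mathbf{1}_{(X_1,X_2)\in\,\cdot}\big].$$
These are nonnegative Borel measures on $[0,1]^2$ with $\mu+\nu=m$. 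To check $(\mu,\nu)\in\mathcal{R}$, observe that $\mathbf{1}_{X_1\in A}$ is $\mathcal{G}_1$-measurable, so the tower property together with $X_1=\mathbb{E}[\mathbf{1}_E|\mathcal{G}_1]$ gives $\mu^x(A)=\mathbb{E}[X_1\mathbf{1}_{X_1\in A}]$ and $\nu^x(A)=\mathbb{E}[(1-X_1)\mathbf{1}_{X_1\in A}]$. Consequently both $\int_A(1-x)\,\mathrm{d}\mu^x$ and $\int_A x\,\mathrm{d}\nu^x$ reduce to $\mathbb{E}[X_1(1-X_1)\mathbf{1}_{X_1\in A}]$, which yields the first identity of Definition \ref{R-set}; the second follows identically with $X_2$ and $\mathcal{G}_2$.

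For the converse, given $(\mu,\nu)\in\mathcal{R}$ with $m=\mu+\nu$ a probability measure, I would build a coherent vector by hand. Put $\Omega=[0,1]^2\times\{0,1\}$ with the probability measure $\mathbb{P}$ determined by $\mathbb{P}(A\times\{1\})=\mu(A)$ and $\mathbb{P}(A\times\{0\})=\nu(A)$ for Borel $A\subseteq[0,1]^2$; this has total mass one precisely because $\mu+\nu=m$ is a probability measure. Let $E=[0,1]^2\times\{1\}$, let $X_1,X_2$ be the two coordinate projections of $[0,1]^2$, and let $\mathcal{G}_i=\sigma(X_i)$. The law of $(X_1,X_2)$ under $\mathbb{P}$ is the pushforward of $\mathbb{P}$ by the projection onto $[0,1]^2$, which equals $\mu+\nu=m$. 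It then remains to verify $\mathbb{P}(E|\mathcal{G}_1)=X_1$ almost surely: for every bounded Borel $g$ one has
$$\mathbb{E}\big[\mathbf{1}_E\,g(X_1)\big]=\int g\,\mathrm{d}\mu^x,\qquad \mathbb{E}\big[X_1\,g(X_1)\big]=\int x\,g\,\mathrm{d}(\mu^x+\nu^x),$$
and these agree exactly when $\int g\,(1-x)\,\mathrm{d}\mu^x=\int x\,g\,\mathrm{d}\nu^x$, i.e. when the first relation of $\mathcal{R}$ holds, extended from indicators to bounded $g$ by the standard approximation argument. Since $X_1$ is $\mathcal{G}_1$-measurable and $[0,1]$-valued, it is a genuine version of $\mathbb{P}(E|\mathcal{G}_1)$; the identity $\mathbb{P}(E|\mathcal{G}_2)=X_2$ follows the same way, so $(X_1,X_2)$ is coherent with law $m$.

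The measure-theoretic bookkeeping is routine. The only points demanding care are the passage from the set-indexed identities of Definition \ref{R-set} to the conditional-expectation identities for all bounded test functions, and checking that the constructed coordinate $X_i$ is truly a valid version of $\mathbb{P}(E|\mathcal{G}_i)$ rather than merely an almost-everywhere candidate. I expect the main conceptual obstacle to lie in the converse direction, namely recognising the correct product space $\Omega=[0,1]^2\times\{0,1\}$ and seeing that the two relations defining $\mathcal{R}$ are exactly the two per-coordinate conditional-expectation constraints, rather than in any hard estimate.
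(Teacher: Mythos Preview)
The paper does not prove this proposition; it simply cites \cite{asymp} and \cite{B1}. Your argument is correct and is essentially the standard one behind those references: split the law of the coherent pair along $E$ and $E^c$ to obtain $\mu$ and $\nu$, and conversely realise a given $(\mu,\nu)\in\mathcal{R}$ on the enlarged space $[0,1]^2\times\{0,1\}$ with $E$ the fibre over $1$. Both directions are handled cleanly, and the one point you flag as needing care --- that $\mathbf{1}_{X_1\in A}$ is $\mathcal{G}_1$-measurable --- is immediate since $X_1=\mathbb{P}(E\mid\mathcal{G}_1)$ is by construction $\mathcal{G}_1$-measurable.
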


Hereafter, for $m\in \mathcal{C}$ and any pair $(\mu, \nu) \in \mathcal{R}$ with $m=\mu+\nu$, we will shortly  say that  $(\mu, \nu)$ is the representation of a coherent distribution $m$. We have the following fact. 

\begin{prop} The pair $(\mu_r, \nu_r)$ is a representation of a coherent distribution $m_r$.
\end{prop}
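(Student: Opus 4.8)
The plan is to invoke Proposition \ref{m=mu+nu}: it is enough to verify that $m_r$ is a probability measure and that $(\mu_r,\nu_r)\in\mathcal{R}$. The normalization is immediate from (\ref{c-def}), since
\[
m_r([0,1]^2)=\mu_r(\Delta_r)+\nu_r(\Gamma_r)=c_r^{-1}\int_0^r\frac{s}{1-s}\,\mathrm{d}s+c_r^{-1}\int_0^r 1\,\mathrm{d}s=c_r^{-1}c_r=1,
\]
and both $\mu_r$ and $\nu_r$ are nonnegative because $s/(1-s)\ge 0$ for $s\in[0,r]\subset[0,1)$.

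To check $(\mu_r,\nu_r)\in\mathcal{R}$ I would first write down the four one-dimensional marginals. As $\mu_r$ is carried by the diagonal $\Delta_r$, its two marginals coincide, and by (\ref{mur-defi}) they equal the measure on $[0,r]$ with Lebesgue density $x\mapsto c_r^{-1}\,x/(1-x)$ (and vanishing elsewhere). From (\ref{nur-defi}), $\nu_r^x$ is $c_r^{-1}$ times Lebesgue measure restricted to $[0,r]$. The only mildly non-routine computation is the identification of $\nu_r^y$, which is the pushforward of $c_r^{-1}$ times $\mathrm{Leb}|_{[0,r]}$ under the tent map $t_r(x)=2\min\{x,r-x\}$. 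Splitting $[0,r]$ into the two monotone branches $[0,r/2]$ and $[r/2,r]$, on each of which $t_r$ is affine of slope $\pm2$, the change-of-variables formula shows that each branch contributes $\tfrac12 c_r^{-1}\,\mathrm{Leb}|_{[0,r]}$, so $\nu_r^y=c_r^{-1}\,\mathrm{Leb}|_{[0,r]}=\nu_r^x$. In other words, $t_r$ preserves normalized Lebesgue measure on $[0,r]$ -- the elementary dynamical fact underlying the whole construction.

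With the marginals in hand, the two defining identities of $\mathcal{R}$ collapse to trivial integral comparisons: since all four marginals are supported in $[0,r]$ we may assume $A,B\subseteq[0,r]$, and then for every such $A$,
\[
\int_A(1-x)\,\mathrm{d}\mu_r^x=c_r^{-1}\int_A(1-x)\,\frac{x}{1-x}\,\mathrm{d}x=c_r^{-1}\int_A x\,\mathrm{d}x=\int_A x\,\mathrm{d}\nu_r^x,
\]
and, because $\mu_r^y=\mu_r^x$ and $\nu_r^y=\nu_r^x$, the very same chain of equalities with $y$ in place of $x$ gives $\int_B(1-y)\,\mathrm{d}\mu_r^y=\int_B y\,\mathrm{d}\nu_r^y$ for every Borel $B$. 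Hence $(\mu_r,\nu_r)\in\mathcal{R}$, and Proposition \ref{m=mu+nu} yields that $m_r$ is coherent with representation $(\mu_r,\nu_r)$. I do not anticipate any genuine obstacle here; the tent-map pushforward is the only step requiring a moment's care, and it is entirely standard.
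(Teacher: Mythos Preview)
Your proof is correct and follows essentially the same route as the paper: reduce both $\mathcal{R}$-identities to the single $x$-marginal computation by first establishing $\mu_r^x=\mu_r^y$ (diagonal support) and $\nu_r^x=\nu_r^y$ (tent-map invariance of Lebesgue), then cancel the $(1-x)$ factor against the density of $\mu_r^x$. You are slightly more explicit than the paper in spelling out the normalization and the two-branch change-of-variables for the pushforward $\nu_r^y$, but these are exactly the ``easily checked'' steps the paper leaves to the reader.
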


\begin{proof} We need to check that $(\mu_r, \nu_r)\in \mathcal{R}$.  	First, let us note that $\mu_r^x=\mu_r^y$ since the measure $\mu_r$ is concentrated on the diagonal $\Delta_r$. Secondly, we can easily check that $\nu_r^x=\nu_r^y$ and this common measure is proportional to $\mathcal{U}(r)$, the uniform distribution on $[0,r]$:
\begin{equation} \label{nu^x=U(r)} (c_r/r)\cdot \nu_r^x \ = \ \mathcal{U}(r), \end{equation}
where $c_r$ is defined as in (\ref{c-def}). Hence, in the light of Definition \ref{R-set}, we will be done if we verify that
$$\int_{A}(1-x) \ \mathrm{d}\mu_r^x   \ =   \ \int_{A} x \ \mathrm{d}\nu_r^x,$$
for any Borel subset $A \in \mathcal{B}([0,r])$.  However, using (\ref{mur-defi}) and (\ref{nur-defi}), we get
$$\int_{A}(1-x) \ \mathrm{d}\mu_r^x  \    =   \ \frac{1}{c_r} \int_{A}(1-x)\frac{x}{1-x} \ \mathrm{d}x,$$
and
$$ \int_{A} x \ \mathrm{d}\nu_r^x \  = \ \frac{1}{c_r}\int_A x \ \mathrm{d}x,$$
which completes the proof.
\end{proof}

It is important to distinguish some special types of representations with two additional significant features.
For two Borel measures $\pi_1$, $\pi_2$ supported on the unit square $[0,1]^2$, we
write $\pi_1\le \pi_2$ if the estimate $\pi_1(A) \le \pi_2(A)$ holds for all $A \in \mathcal{B}([0,1]^2)$.

\begin{defi} \label{Un-Mi} Let $m\in \mathcal{C}$.  We say that the representation $(\mu, \nu)$ of $m$ is
\smallskip

$\cdot$ \emph{unique}, if for every $(\tilde{\mu}, \tilde{\nu})\in \mathcal{R}$ with $m=\tilde{\mu}+\tilde{\nu}$, we have $\tilde{\mu}=\mu$ and $\tilde{\nu}=\nu$;

\smallskip

$\cdot$ \emph{minimal}, if for all $(\tilde{\mu}, \tilde{\nu})\in \mathcal{R}$ with $\tilde{\mu}\le \mu$ and $\tilde{\nu}\le \nu$, there exists $\alpha \in [0,1]$ such that $(\tilde{\mu}, \tilde{\nu}) = \alpha \cdot (\mu, \nu)$. \end{defi} 

As carefully explained in \cite{asymp}, these conditions allow to state a complete description of the class $\mathrm{ext}(\mathcal{C})$.

\begin{thm} \label{ext(C)-char} Let $m$ be a coherent distribution on $[0, 1]^2$. Then $m$ is extremal if and only
if the representation of $m$ is unique and minimal. 
\end{thm}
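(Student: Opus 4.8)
The plan is to exploit the linear structure behind Proposition \ref{m=mu+nu}. Observe first that $\mathcal{R}$ is a convex cone: it is closed under addition and under multiplication by nonnegative scalars, since the conditions in Definition \ref{R-set} are linear in $(\mu,\nu)$ and the nonnegative measures form a cone. Write $\Phi(\mu,\nu)=\mu+\nu$ for the summation map and set $\mathcal{R}_1=\{(\mu,\nu)\in\mathcal{R}:(\mu+\nu)([0,1]^2)=1\}$, so that $\mathcal{C}=\Phi(\mathcal{R}_1)$. Throughout I would use the elementary remark that a difference $(\mu',\nu')-(\mu'',\nu'')$ of elements of $\mathcal{R}$ whose components remain nonnegative is again in $\mathcal{R}$, because the constraints survive subtraction.

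For the easy implication ($\Leftarrow$), assume the representation $(\mu,\nu)$ of $m$ is unique and minimal in the sense of Definition \ref{Un-Mi}, and suppose $m=\tfrac12(m_1+m_2)$ with $m_1,m_2\in\mathcal{C}$. Lifting via Proposition \ref{m=mu+nu}, I pick representations $(\mu_i,\nu_i)$ of $m_i$; their average lies in $\mathcal{R}$ and represents $m$, so by uniqueness it equals $(\mu,\nu)$. In particular $\tfrac12\mu_1\le\mu$ and $\tfrac12\nu_1\le\nu$ with $(\tfrac12\mu_1,\tfrac12\nu_1)\in\mathcal{R}$, whence minimality forces $(\tfrac12\mu_1,\tfrac12\nu_1)=\alpha(\mu,\nu)$; comparing total masses gives $\alpha=\tfrac12$, so $m_1=m$ and likewise $m_2=m$. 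Thus $m$ is extreme.

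The converse ($\Rightarrow$) is where I expect the real difficulty, and the delicate point is \emph{uniqueness}: two distinct representations produce the very same $m$, so they do not visibly split it. Assume $m$ is extreme and suppose $(\mu_0,\nu_0)\ne(\mu_1,\nu_1)$ both represent $m$. Put $\rho=\mu_1-\mu_0$, so that $\nu_1-\nu_0=-\rho$ and $-\mu_0\le\rho\le\nu_0$, with $\rho\ne0$. Subtracting the two identities of Definition \ref{R-set} for the two representations shows that $\rho$ has vanishing marginals, $\rho^x=\rho^y=0$. The crucial idea is to perturb the averaged representation $(\bar\mu,\bar\nu)=\tfrac12(\mu_0+\mu_1,\nu_0+\nu_1)\in\mathcal{R}$ in the \emph{diagonal} direction $(\rho,\rho)$ rather than the fibre direction $(\rho,-\rho)$. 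Because $\rho^x=\rho^y=0$, the pair $(\rho,\rho)$ satisfies the homogeneous version of both constraints and carries zero total mass, while the bounds $-\mu_0\le\rho\le\nu_0$ guarantee that $(\bar\mu\pm t\rho,\bar\nu\pm t\rho)$ stays nonnegative for small $t>0$. Hence these two pairs lie in $\mathcal{R}_1$, and applying $\Phi$ produces two distinct coherent distributions $m\pm2t\rho$ with $m=\tfrac12\big((m+2t\rho)+(m-2t\rho)\big)$, contradicting extremality. Therefore the representation is unique.

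Minimality then follows quickly. If the now-unique representation $(\mu,\nu)$ were not minimal, I would choose $(\tilde\mu,\tilde\nu)\in\mathcal{R}$ with $\tilde\mu\le\mu$, $\tilde\nu\le\nu$ and not a scalar multiple of $(\mu,\nu)$; its total mass $\beta$ lies in $(0,1)$. Normalizing $(\tilde\mu,\tilde\nu)$ and its complement $(\mu-\tilde\mu,\nu-\tilde\nu)\in\mathcal{R}$ exhibits $m$ as a proper convex combination $\beta m_1+(1-\beta)m_2$ of coherent distributions, so extremality forces $m_1=m$; then $(\tilde\mu/\beta,\tilde\nu/\beta)$ represents $m$, and the uniqueness just proved gives $(\tilde\mu,\tilde\nu)=\beta(\mu,\nu)$, contradicting the choice. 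This completes the characterization. The single genuinely nonobvious ingredient is the diagonal perturbation $(\rho,\rho)$, whose admissibility rests entirely on the vanishing of the marginals of $\rho$; everything else is bookkeeping with the cone $\mathcal{R}$.
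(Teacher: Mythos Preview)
The paper does not actually prove Theorem \ref{ext(C)-char}; it merely quotes the statement and refers the reader to \cite{asymp} for the argument. So there is no in-paper proof to compare against, and your proposal is doing strictly more than the paper does here.

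Your argument is correct. The $(\Leftarrow)$ direction is routine. For $(\Rightarrow)$, the key step is the uniqueness half, and your diagonal perturbation $(\rho,\rho)$ is exactly the right idea: since $\rho^x=\rho^y=0$ (which you correctly derive by subtracting the two $\mathcal{R}$-identities and observing that $\int_A(1-x)\,d\rho^x=-\int_A x\,d\rho^x$ forces $\rho^x(A)=0$), the pair $(\rho,\rho)$ satisfies the homogeneous constraints and has zero total mass, so $(\bar\mu\pm t\rho,\bar\nu\pm t\rho)\in\mathcal{R}_1$ for small $t$ and $\Phi$ sends them to distinct coherent measures averaging to $m$. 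One small cosmetic point: the bounds you cite, $-\mu_0\le\rho\le\nu_0$, only directly handle two of the four nonnegativity checks; the other two use the equally trivial bounds $-\nu_1\le\rho\le\mu_1$. Cleaner is to note that $\bar\mu\pm t\rho=(\tfrac12\mp t)\mu_0+(\tfrac12\pm t)\mu_1$ and $\bar\nu\pm t\rho=(\tfrac12\pm t)\nu_0+(\tfrac12\mp t)\nu_1$ are manifestly nonnegative for $|t|\le\tfrac12$. The minimality paragraph is fine; the case analysis $\beta\in(0,1)$ is easily justified (if $\beta=0$ or $\beta=1$ one gets a scalar multiple immediately).
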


By the above statement, our main result, Theorem \ref{main_theorem}, will be established once we have shown that representation $(\mu_r, \nu_r)$ of $m_r$ is indeed unique and minimal. These two properties will be proved in Section 3 and Section 4 below. 

\subsection{On the idea behind the special measure $m_r$} 
For $\eta \in [0,2]$, let $T_{\eta}:[0,1]\rightarrow [0,1]$ denote the celebrated 'tent map' function, defined by $T_{\eta}(x)=\eta \min\{x, 1-x\}.$
For our purposes, only the case $\eta=2$ will be needed; the corresponding function  $T_2$ is sometimes referred to as a 'full tent map'. After picking a starting point $x_0\in [0,1]$, map $T_2$ defines a discrete-time dynamical system via the recurrence relation

$$x_{n+1} \ = \ T_2(x_n) \ = \  \begin{cases}
2x_n & \mbox{if } \ x_n\le \frac{1}{2},\smallskip\\
\displaystyle 2(1-x_n) & \mbox{if } \ x_n>\frac{1}{2},
\end{cases}$$
for $n=0,1,2,\dots$ Now, if $x_0$ is irrational, then the resulting sequence  $(x_n)_{n=0}^{\infty}$  becomes injective.  To describe the dynamics, one often makes use of the so-called cobweb plot (or Verhulst diagram):  put points successively and alternately on the main diagonal and on  the graph of function $T_2$, according to the following algorithm:  
\smallskip

\begin{enumerate}[label=\arabic*.] \addtocounter{enumi}{-1}
\item place a starting point $(x_0, x_0)$ on the diagonal,
\end{enumerate}

then, for $n=1,2,\dots$, repeat
\begin{enumerate}[label=\arabic*.]
\item add a point $\left(x_{n-1}, T_2(x_{n-1})\right)$ on the graph of function $T_2$,
\item place a  point $(x_n, x_n)$ on the diagonal and increase $n$ by $1$. 
\end{enumerate}
\smallskip

Let us consider a simple extension of this procedure: apart from drawing points on the diagram, we will also assign specific weights to each new location. Namely, for a fixed huge integer $N$, place (red) mass ${x_n}/({1-x_n})$ at every point $(x_n, x_n)$ on the diagonal, $n=1,2,\dots,N-1$, and the (blue) mass $1$ for each point on the graph of function $T_2$. Finally, assign weight zero (and a purple color) to the initial and terminal points $(x_0,x_0)$, $(x_N,x_N)$. See Figure \ref{rys2}. 

\begin{figure}[H] 
\centering
\includegraphics[width=50mm]{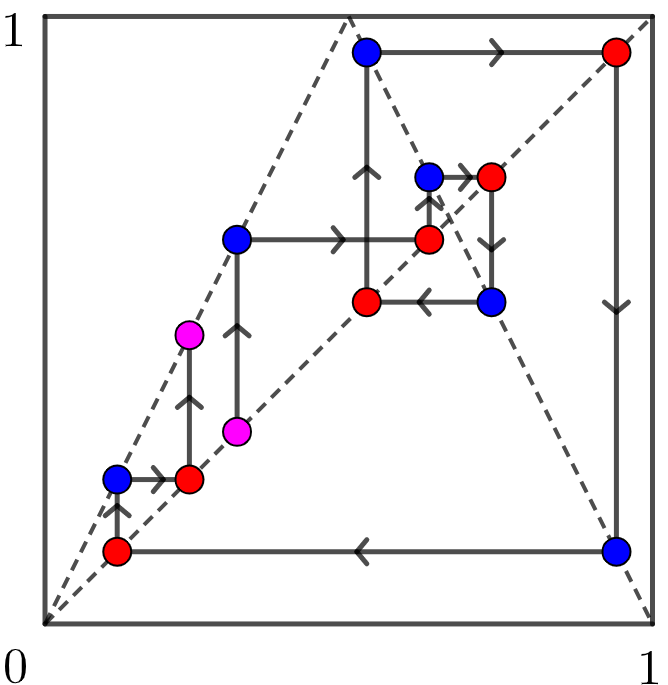}
\vspace{0.2cm}
\caption{An example of a cobweb diagram showing the first iterations of a full tent map system.  Purple points are endpoints of an  'axial path' obtained in this process. Red points represent masses placed on the diagonal,
while blue points indicate masses added on the graph of function $T_2$.}
\label{rys2} 
\end{figure}

We are ready to discuss (informally) the intuition which has led us to the measure $m_r$. Take the initial subsequence $(x_0, x_0), (x_0, x_1), (x_1, x_1), \dots, (x_{N-1}, x_{N})$, for some big number $N$. Let $\widetilde{m}^N$ be a probability distribution supported on those points, determined by their normalized weights. Then, choose a random point $P$ on the square (distributed according to $\widetilde{m}^N$) and let $(X,Y)$ be its coordinates. Let $E$ denote the event that $P$ belongs to the diagonal. Then for any $1\le i,j \le N-1$ we may write
\begin{equation} \label{Px-coh} \mathbb{P}(E|X=x_i) \ = \ \frac{\frac{x_i}{1-x_i}}{\frac{x_i}{1-x_i}+1} \ =  \ x_i  \end{equation}
and
\begin{equation} \label{Py-coh} \mathbb{P}(E|Y=x_j) \ = \  \frac{\frac{x_j}{1-x_j}}{\frac{x_j}{1-x_j}+1} \ = \ x_j, \end{equation}
where the middle fraction is the ratio of the red mass to the total mass in the line. Intuitively, equations (\ref{Px-coh}) and  (\ref{Py-coh}) show that 
$\widetilde{m}^N$ is 'almost coherent': the representation of this measure is the pair of its red and blue component. 
Now it is natural to try to carry out a limiting argument: as $N$ goes to infinity, red points uniformly cover the entire diagonal and blue points fill up the graph of $T_2$. It seems plausible that $\widetilde{m}^N$ converges to a certain limiting measure $\widetilde{m}\in \mathrm{ext}(\mathcal{C})$ with no atoms. Unfortunately, this is not the case: as $\frac{x}{1-x}\rightarrow \infty$ for $x\rightarrow 1$, the red mass explodes uncontrollably near the corner $(1,1)$. 

Nevertheless, there is an easy way to overcome this obstacle. Rather than working with the transform $T_2$, we will simply employ its rescaled version $t_r:[0,r]\rightarrow [0,r]$, $$t_r(x)=2\min\{x, r-x\}.$$
Observe that images of transform $T_2$ and its shrunken version $t_r$ are homothetic. Thus, if we just repeat the former reasoning with $t_r$ acting on $[0,r]$, instead of $T_2$ acting on $[0,1]$, then any point mass added to the diagram will not exceed $\frac{r}{1-r}<\infty$. This leads us to the consideration of
a new sequence $\widetilde{m}^N_r$,  defined analogously to  $\widetilde{m}^N$, of 'almost coherent' and 'resembling extreme points' measures on $[0,r]^2$.
Letting $N\to \infty$, looking at the red and blue components of $\widetilde{m}^N_r$ separately, we see that red mass approaches to $\mu_r$ and blue mass tends to $\nu_r$. This motivates our claim:  $m_r = \mu_r + \nu_r$ is an extreme coherent distribution.

\subsection{Some basic facts from the theory of dynamical systems} To carry out a formally strict justification, we will need the following classical definition of a measure-preserving and ergodic transformation. By $B\triangle C$ we will denote the symmetric difference $(B\setminus C)\cup (C \setminus B)$ of sets $B$ and $C$.

\begin{defi} Fix a constant $s>0$ and let $\pi$ be a Borel probability measure on $[0,s]$. Let $T:[0,s]\rightarrow [0,s]$ be a Borel measurable function.
We say that the transformation $T$ is
\smallskip

$\cdot$ \emph{measure-preserving} (or equivalently, that $\pi$ is \emph{$T$-invariant}), if 
$$\pi(T^{-1}(A)) \ = \ \pi(A),$$  for all Borel subsets $A\in \mathcal{B}([0,s])$;

\smallskip

$\cdot$ \emph{ergodic}  (or equivalently, that $\pi$ is an \emph{ergodic measure}), if $T$ is measure-preserving and
any set $A\in \mathcal{B}([0,s])$ with $\pi \left(T^{-1}(A) \triangle A\right)=0$ must satisfy $\pi(A)\in \{0,1\}$.
\end{defi}

Therefore, if a measure-preserving transformation $T$ is not ergodic, then we can find some $A\in \mathcal{B}([0,s])$  with $T^{-1}(A)=A$ and $\pi(A)\in (0,1)$. 
In principle, this means that restrictions $T|_A:A\rightarrow A$ and $T|_{A^c}:A^c\rightarrow A^c$ define a non-trivial decomposition of the map $T$ into two simpler subsystems. Noteworthy, there is a subtle similarity between ergodicity of transformations and minimality of representations from Definition \ref{Un-Mi}. Eventually, we need to make more precise the statement: 'full tent map $T_2$ covers $[0,1]$ uniformly'.

\begin{prop}\label{U(1)-ERG}
$\mathcal{U}(1)$, the uniform distribution on $[0,1]$, is $T_2$-invariant and ergodic.
\end{prop}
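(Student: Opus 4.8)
The plan is to dispatch invariance by a one-line computation and then obtain ergodicity by transporting the problem to a model system that is classically known to be ergodic. For the invariance, note that for every subinterval $[a,b]\subseteq[0,1]$ one has
$$T_2^{-1}\big([a,b]\big) \ = \ \big[\tfrac a2,\tfrac b2\big]\ \cup\ \big[1-\tfrac b2,\ 1-\tfrac a2\big],$$
a union of two intervals of total length $b-a$; since such intervals generate $\mathcal{B}([0,1])$ and both $A\mapsto \mathcal{U}(1)(T_2^{-1}A)$ and $A\mapsto \mathcal{U}(1)(A)$ are Borel measures, the identity $\mathcal{U}(1)(T_2^{-1}A)=\mathcal{U}(1)(A)$ propagates from intervals to all Borel sets.

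For ergodicity, the idea is to exhibit $([0,1],\mathcal{U}(1),T_2)$ as a measurable isomorphic copy of the one-sided $(\tfrac12,\tfrac12)$-Bernoulli shift $S$ on $\{0,1\}^{\mathbb N}$ (equivalently, of the doubling map on $([0,1],\mathcal{U}(1))$), which is classically ergodic, in fact mixing. Concretely, I would work with the binary coding $\beta(x)=(a_k(x))_{k\ge1}$, $x=\sum_k a_k(x)2^{-k}$, which is a measure-preserving Borel isomorphism of $([0,1],\mathcal{U}(1))$ onto $(\{0,1\}^{\mathbb N},\mathrm{Bern}(\tfrac12,\tfrac12))$ once the $\mathcal{U}(1)$-null set of dyadic rationals is discarded. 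A short case analysis on whether $x\le\tfrac12$ shows that $T_2$ acts on digit sequences by a shift followed by a conditional flip,
$$a_k\big(T_2(x)\big) \ = \ a_{k+1}(x)\oplus a_1(x) \qquad (k\ge 1),$$
with $\oplus$ addition mod $2$. Introducing the Gray-code bijection $\Phi$ of $\{0,1\}^{\mathbb N}$ given by $\Phi(a)_k=a_1\oplus\cdots\oplus a_k$, one checks that $\Phi$ intertwines the plain shift $S$ with the digit action above and that $\Phi$ maps each cylinder onto a cylinder of equal measure, hence preserves $\mathrm{Bern}(\tfrac12,\tfrac12)$. Then $\psi:=\Phi^{-1}\circ\beta$ is a measure-preserving isomorphism of $([0,1],\mathcal{U}(1))$ with $(\{0,1\}^{\mathbb N},\mathrm{Bern}(\tfrac12,\tfrac12))$ satisfying $\psi\circ T_2=S\circ\psi$ off a null set, and ergodicity of $S$ transfers to $T_2$.

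The only genuine obstacle is the measure-theoretic bookkeeping of this conjugacy: one must be careful that $\beta$ and $\Phi$ are honest isomorphisms modulo null sets (the dyadic rationals and their images), that the displayed digit identity and the intertwining relation $\psi\circ T_2=S\circ\psi$ hold almost everywhere rather than everywhere, and that all sets removed along the way are indeed $\mathcal{U}(1)$-null and $T_2$-invariant up to null sets. None of this is deep, but it is where a fully rigorous argument has to spend its care.

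If one wishes to avoid symbolic dynamics entirely, an equally short and self-contained route is available via Fourier analysis. Using the orthonormal basis $\{1\}\cup\{\sqrt2\cos(\pi n x):n\ge1\}$ of $L^2([0,1])$ together with the elementary identity $\cos\big(\pi n\,T_2(x)\big)=\cos(2\pi n x)$, valid for every $x\in[0,1]$, one sees that the Koopman operator $f\mapsto f\circ T_2$ sends $\cos(\pi n x)$ to $\cos(\pi(2n)x)$. Hence if $f\in L^2([0,1])$ satisfies $f\circ T_2=f$ almost everywhere, comparing Fourier coefficients forces them to vanish at all odd indices $n\ge1$ and to satisfy $c_{2n}=c_n$ for all $n$; writing $n=2^j\cdot(\text{odd})$ this gives $c_n=0$ for every $n\ge1$, so $f$ is almost everywhere constant, and ergodicity follows from the standard characterisation of ergodicity via invariant functions.
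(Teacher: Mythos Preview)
Your two arguments are correct. The invariance computation is fine, the Gray-code conjugacy to the one-sided Bernoulli shift is checked accurately (your displayed digit identity and the intertwining $\psi\circ T_2=S\circ\psi$ both hold off the dyadic rationals, and $\Phi$ indeed sends length-$n$ cylinders to length-$n$ cylinders, hence preserves the $(\tfrac12,\tfrac12)$ product measure), and the Fourier route is clean: the identity $\cos(\pi n\,T_2(x))=\cos(2\pi n x)$ holds pointwise on all of $[0,1]$, so the Koopman operator doubles the cosine index and the coefficient recursion $c_m=0$ for $m$ odd, $c_{2n}=c_n$, forces every nonconstant coefficient to vanish.

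As for comparison with the paper: there is nothing to compare. The paper treats Proposition~\ref{U(1)-ERG} as classical, omits the proof entirely, and refers the reader to the literature (e.g.\ \cite{Buzzi2009}). Your write-up therefore goes strictly beyond what the paper does; either of your arguments would serve as a self-contained substitute for the citation. The Fourier argument is the shorter and more elementary of the two and avoids the null-set bookkeeping you flagged in the symbolic approach, so if you want a proof to actually include, that one is the cleaner choice.
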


Proposition \ref{U(1)-ERG} is a well-known fact in the theory of dynamical systems; hence, we omit its proof, referring the interested reader to  \cite{Buzzi2009}, for example. In what follows, we will need a simple reformulation of this statement for $r<1$.

\begin{cor}\label{U(r)-tr-inv} $\mathcal{U}(r)$, the uniform distribution on $[0,r]$, is $t_r$-invariant and ergodic.
\end{cor}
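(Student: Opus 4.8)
The plan is to use the fact that $t_r$ is linearly conjugate to the full tent map $T_2$, and then transport both properties across the conjugacy. Introduce the rescaling $\phi\colon[0,1]\to[0,r]$, $\phi(x)=rx$; this is a linear homeomorphism, hence a Borel isomorphism with Borel inverse $\phi^{-1}(y)=y/r$, and its pushforward carries $\mathcal{U}(1)$ to $\mathcal{U}(r)$, since for every Borel $A\subseteq[0,r]$ we have $\mathcal{U}(1)\big(\phi^{-1}(A)\big)=\mathrm{Leb}(A/r)=\mathrm{Leb}(A)/r=\mathcal{U}(r)(A)$.

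First I would check the conjugacy relation $t_r\circ\phi=\phi\circ T_2$ on all of $[0,1]$: indeed $t_r(\phi(x))=2\min\{rx,\,r-rx\}=2r\min\{x,\,1-x\}=r\,T_2(x)=\phi(T_2(x))$ for every $x\in[0,1]$, including the break point $x=\tfrac12$, where both sides equal $r$. Because $\phi$ is a bijection and preimages commute with all Boolean operations (in particular with symmetric differences), this yields $\phi^{-1}\circ t_r^{-1}=T_2^{-1}\circ\phi^{-1}$ at the level of sets.

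Next I would transfer measure-preservation. For Borel $A\subseteq[0,r]$, combining $\phi_*\mathcal{U}(1)=\mathcal{U}(r)$, the conjugacy relation, and the $T_2$-invariance of $\mathcal{U}(1)$ from Proposition \ref{U(1)-ERG},
\begin{align*}
\mathcal{U}(r)\big(t_r^{-1}(A)\big)
&=\mathcal{U}(1)\big(\phi^{-1}(t_r^{-1}(A))\big)
=\mathcal{U}(1)\big(T_2^{-1}(\phi^{-1}(A))\big)\\
&=\mathcal{U}(1)\big(\phi^{-1}(A)\big)
=\mathcal{U}(r)(A),
\end{align*}
so $\mathcal{U}(r)$ is $t_r$-invariant. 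For ergodicity, take Borel $A\subseteq[0,r]$ with $\mathcal{U}(r)\big(t_r^{-1}(A)\triangle A\big)=0$ and put $B=\phi^{-1}(A)$. Then $T_2^{-1}(B)\triangle B=\phi^{-1}\big(t_r^{-1}(A)\triangle A\big)$, hence $\mathcal{U}(1)\big(T_2^{-1}(B)\triangle B\big)=\mathcal{U}(r)\big(t_r^{-1}(A)\triangle A\big)=0$; ergodicity of $\mathcal{U}(1)$ forces $\mathcal{U}(1)(B)\in\{0,1\}$, and therefore $\mathcal{U}(r)(A)=\mathcal{U}(1)(B)\in\{0,1\}$.

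I do not expect a genuine obstacle: this is the standard principle that measure-preservation and ergodicity are invariants of a measurable conjugacy, applied to the most explicit conjugacy imaginable. The only points requiring minor care are the normalization constant linking $\mathcal{U}(1)$ with $\mathcal{U}(r)$ and the verification of the conjugacy identity at the non-differentiable point of the tent maps.
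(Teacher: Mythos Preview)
Your proof is correct and follows essentially the same approach as the paper: both transfer invariance and ergodicity from $T_2$ to $t_r$ via the linear rescaling $x\mapsto rx$. The only cosmetic difference is that you work with the pushforward formalism and the symmetric-difference definition of ergodicity directly, whereas the paper phrases the same computation via the preimage identity $t_r^{-1}(x)=r\cdot T_2^{-1}(x/r)$ and argues ergodicity by contradiction using a strictly invariant set.
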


\begin{proof} For $x\in [0,r]$, we have
\begin{equation} \label{tr-1} t_r^{-1}(x) \ = \ r\cdot T_2^{-1}(x/r). \end{equation}
To check that $\mathcal{U}(r)$ is $t_r$-invariant, we need to show that

\begin{equation} \label{tr-inv}  \lvert t_r^{-1}(A) \rvert \cdot \frac{1}{r} \ = \  \lvert A \rvert  \cdot \frac{1}{r},  \end{equation} 
for all $A\in \mathcal{B}([0,r])$, where  $ \lvert \cdot \rvert$ stands for the Lebesgue measure on the line. Yet, by (\ref{tr-1}) and Proposition \ref{U(1)-ERG} (as $\mathcal{U}(1)$ is $T_2$-invariant), we can write
$$\lvert t_r^{-1}(A) \rvert \cdot r^{-1} \ = \ \lvert T_2^{-1}(A/r)\rvert \ = \ \lvert A/r \rvert \ = \ \lvert A \rvert \cdot r^{-1},$$
which proves (\ref{tr-inv}). Next, if $t_r$ was not ergodic, then we could find a set $B\in \mathcal{B}([0,r])$  with $t_r^{-1}(B)=B$ and 
$\lvert B \rvert \in (0,r)$. Consequently, by (\ref{tr-1}) we would obtain
$$T_2^{-1}(B/r) \ = \ \frac{1}{r}\cdot t_r^{-1}(B) \ = \ B/r,$$
while $B/r\in \mathcal{B}([0,1])$ and $\lvert B/r \rvert \in (0,1)$. This would indicate that the map $T_2$ is not ergodic, contradicting Proposition \ref{U(1)-ERG}.
\end{proof}


\section{Proof of uniqueness}

Now we will show that $(\mu_r, \nu_r)$ is the only representation of measure $m_r$. To this end, assume that 
$m_r  =   \widetilde{\mu}_r+\widetilde{\nu}_r$ is some representation $(\widetilde{\mu}_r, \widetilde{\nu}_r)\in \mathcal{R}$; we will demonstrate that $\widetilde{\mu}_r=\mu_r$ and $\widetilde{\nu}_r=\nu_r$. 
From now on, to simplify the notation, we will skip the index $r$ and write  $\mu$, $\nu$, $\widetilde{\mu}$, $\widetilde{\nu}$, $m$ 
instead of  $\mu_r$, $\nu_r$, $\widetilde{\mu}_r$, $\widetilde{\nu}_r$ and $m_r,$
respectively. 
First, fix an arbitrary $A \in \mathcal{B}([0, r])$. By Definition \ref{R-set}, we have 

$$ \int_{A} 1 \ \mathrm{d}\mu^x  \ = \ \int_{A} x \ (\mathrm{d}\nu^x+\mathrm{d}\mu^x) \ = \  
 \int_{A} x \ \mathrm{d}m^x, $$
 and
$$\int_{A} 1 \ \mathrm{d}\widetilde{\mu}^x  \ = \ \int_{A} x \ (\mathrm{d}\widetilde{\nu}^x+\mathrm{d}\widetilde{\mu}^x) \ = \  
 \int_{A} x \ \mathrm{d}m^x,$$
 so $\mu^x(A)=\widetilde{\mu}^x(A)$. Similarly, we can deduce that $\mu^y=\widetilde{\mu}^y$, and hence the marginal distributions of $\mu$ and $\widetilde{\mu}$ are  equal. Because  $\mu +\nu = \widetilde{\mu} + \widetilde{\nu}$, 
we also get 
\begin{equation} \label{nu-x,y-marg} \nu^x=\widetilde{\nu}^x \ \ \ \ \ \ \mathrm{and} \ \ \ \ \ \ \nu^y=\widetilde{\nu}^y.\end{equation}
However, directly from (\ref{nur-defi}), we infer that $\nu^x= \nu^y$ and both these measures are proportional to the uniform distribution $\mathcal{U}(r)$. Let us inspect the finite Borel signed measure $\delta = \widetilde{\nu} - \nu$. 
By the Jordan decomposition theorem, we can find two nonnegative measures $\delta_+$ and $\delta_-$, which are mutually singular and such that 
\begin{equation} \label{d+d-} \delta \ = \ \delta_+ \ - \ \delta_-. \end{equation}
As a consequence of (\ref{nu-x,y-marg}), the marginal measures $\delta^x$ and $\delta^y$ are identically equal to zero.
Therefore, using (\ref{d+d-}) we obtain
\begin{equation} \label{delta+x-x}  \delta^x_+=\delta^x_- \ \ \ \ \ \ \mathrm{and} \ \ \ \ \ \ \ \delta^y_+=\delta^y_-. \end{equation}
The following observation is elementary, nonetheless very helpful.

\begin{prop} \label{d<m,d<n} We have $\delta_-\le \nu $ and $\delta_+\le \mu.$ \end{prop}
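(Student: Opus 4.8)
The plan is to obtain both estimates directly from the nonnegativity of $\widetilde\nu$ and $\widetilde\mu$, combined with the Hahn decomposition that underlies the Jordan decomposition \eqref{d+d-}. First I would fix a Borel set $S\subseteq[0,r]$ for which $\delta_+$ is carried by $S$ and $\delta_-$ by its complement, i.e. $\delta_+(S^c)=\delta_-(S)=0$, so that $\delta=-\delta_-$ on $S^c$ and $\delta=\delta_+$ on $S$. Everything then reduces to testing the relevant positivity on the two halves of this decomposition.

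For the first inequality $\delta_-\le\nu$, I would use the defining identity $\delta=\widetilde\nu-\nu$. For an arbitrary Borel set $B$, since $\delta_-$ is concentrated on $S^c$ we get $\delta_-(B)=\delta_-(B\cap S^c)=-\delta(B\cap S^c)=\nu(B\cap S^c)-\widetilde\nu(B\cap S^c)$, and this is at most $\nu(B\cap S^c)\le\nu(B)$ because $\widetilde\nu\ge0$.

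For the second inequality $\delta_+\le\mu$, the point is to switch to the other decomposition of $m$: from $m=\mu+\nu=\widetilde\mu+\widetilde\nu$ one has $\widetilde\mu=\mu+(\nu-\widetilde\nu)=\mu+\delta_--\delta_+$. Restricting to $B\cap S$, where $\delta_-$ vanishes, nonnegativity of $\widetilde\mu$ gives $0\le\widetilde\mu(B\cap S)=\mu(B\cap S)-\delta_+(B\cap S)$, and hence $\delta_+(B)=\delta_+(B\cap S)\le\mu(B\cap S)\le\mu(B)$.

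This is a short bookkeeping argument, and I do not anticipate any genuine obstacle; the only thing to keep straight is which of the two relations — $\delta=\widetilde\nu-\nu$ versus $m=\mu+\nu=\widetilde\mu+\widetilde\nu$ — is invoked for each bound, and to apply the nonnegativity of the appropriate tilded measure on the correct half of the Hahn decomposition. The substantive work of the uniqueness proof comes afterwards, when this proposition is combined with an ergodicity argument (via Corollary \ref{U(r)-tr-inv}) to force $\delta=0$, and likewise $\widetilde\mu=\mu$.
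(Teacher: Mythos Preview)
Your argument is correct and essentially identical to the paper's: both use the Hahn decomposition and the nonnegativity of $\widetilde\nu$ (for $\delta_-\le\nu$) and of $\widetilde\mu$ via $\widetilde\mu=\mu-\delta$ (for $\delta_+\le\mu$), with your $S$ and $S^c$ playing the roles of the paper's $S_+$ and $S_-$. One trivial slip: the Hahn set $S$ lives in $[0,r]^2$, not $[0,r]$, since $\delta$ is a signed measure on the square.
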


\begin{proof} Since $\delta_+$ and $\delta_-$ are mutually singular, there exist two disjoint Borel sets $S_+, S_-$ with $S_+\cup S_-=[0,r]^2$, such that
\begin{equation} \label{d+S+} \delta_+(A) \ = \ \delta_+(A\cap S_+), \end{equation}
\begin{equation} \label{d-S-} \delta_-(A) \ = \ \delta_-(A\cap S_-), \end{equation}
for all Borel subsets $A\in \mathcal{B}([0,r]^2)$. To show that $\delta_-\le \nu$, it is sufficient to check that 
$$\delta_-(B\cap S_-) \ \le \ \nu(B\cap S_-),$$
for every choice of $B\in \mathcal{B}([0,r]^2)$. By (\ref{d+d-}) and  (\ref{d+S+}), we can write
$$-\delta_-(B\cap S_-) \ = \ \delta(B\cap S_-) \ = \ \widetilde{\nu}(B\cap S_-) - \nu(B \cap S_-),$$
which yields
$$ \nu(B \cap S_-) \ = \ \delta_-(B\cap S_-) + \widetilde{\nu}(B\cap S_-) \ \ge \ \delta_-(B\cap S_-). $$
In the same way,  by (\ref{d+d-}) and  (\ref{d-S-}), we get
\begin{align*}
\delta_+(B\cap S_+) + (\widetilde{\mu}-\mu)(B\cap S_+) & =  \delta(B\cap S_+) + (\widetilde{\mu}-\mu)(B\cap S_+)\\
 & =   (\widetilde{\nu}+\widetilde{\mu})(B\cap S_+) - (\nu+\mu)(B\cap S_+) \ = \ 0, 
\end{align*}
which results in
$$\mu(B\cap S_+) \ = \  \delta_+(B\cap S_+) + \widetilde{\mu}(B\cap S_+) \ \ge \ \delta_+(B\cap S_+),$$ 
and proves that $\delta_+\le \mu$.
\end{proof}

A central component of our argument is the next conclusion.

\begin{figure}[H] 
\centering
\includegraphics[width=44mm]{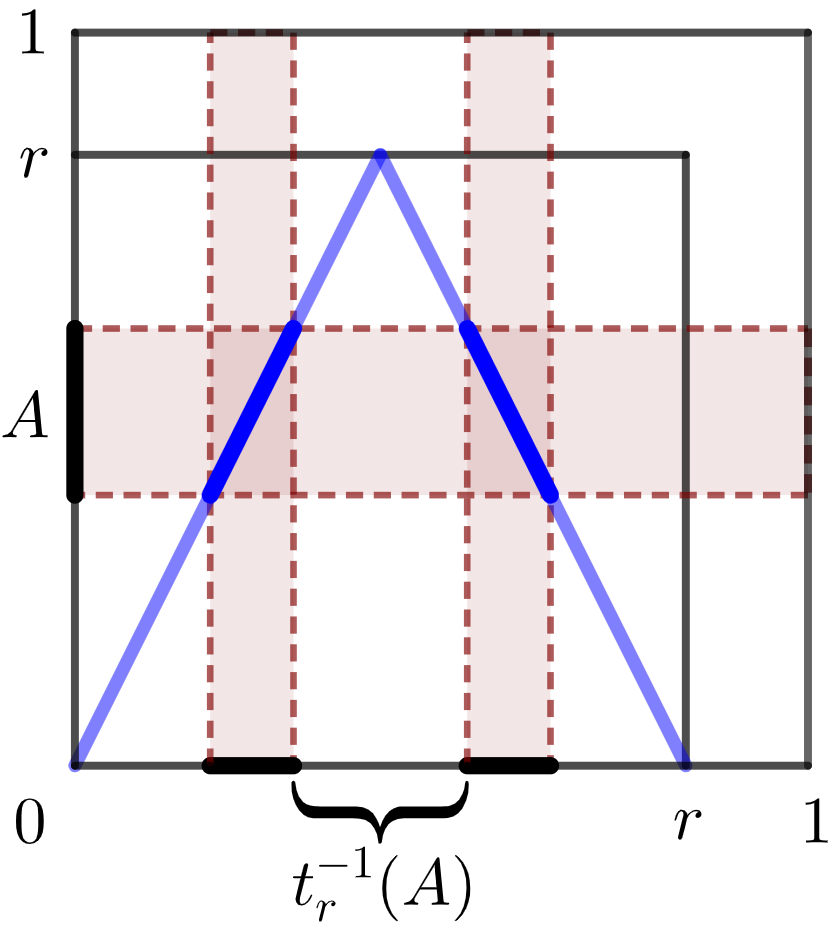}
\vspace{0.2cm}
\caption{The preimage $t_r^{-1}(A)$ of a Borel set $A$ is made up of two non-overlapping subsets with twice as small Lebesgue measure. }
\label{rys3} 
\end{figure}

\begin{prop} \label{a-d-tr-inv} Assume that $\delta_-$ is not  identically equal to zero and let
 $$\alpha_{-}   \ = \ \frac{1}{\delta_-([0,r]^2)}$$ 
denote its norming constant. Then \ $\alpha_{-}\cdot \delta_-^x$ \ is a \  $t_r$-invariant  measure. \end{prop}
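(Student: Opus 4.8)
I want to show that $\alpha_-\cdot\delta_-^x$ is $t_r$-invariant, i.e. that $\delta_-^x\bigl(t_r^{-1}(A)\bigr)=\delta_-^x(A)$ for every Borel $A\subset[0,r]$. The starting observation is that, by Proposition \ref{d<m,d<n}, $\delta_-\le\nu$, so $\delta_-$ is concentrated on the curve $\Gamma_r$; thus $\delta_-^x$ and $\delta_-^y$ are linked by the graph relation: if $Q\colon[0,r]\to[0,r]^2$ is the parametrization $Q(x)=(x,t_r(x))$, then $\delta_-$ is the push-forward under $Q$ of $\delta_-^x$, and consequently $\delta_-^y$ is the push-forward of $\delta_-^x$ under $t_r$, i.e. $\delta_-^y=(t_r)_*\delta_-^x$. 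Equivalently $\delta_-^y(A)=\delta_-^x\bigl(t_r^{-1}(A)\bigr)$ for every Borel $A$. So the whole statement reduces to proving $\delta_-^x=\delta_-^y$.

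**Key steps.** First, recall from \eqref{delta+x-x} that $\delta_+^x=\delta_-^x$ and $\delta_+^y=\delta_-^y$. Second, by Proposition \ref{d<m,d<n} again, $\delta_+\le\mu$, and $\mu=\mu_r$ lives on the diagonal $\Delta_r$; hence $\delta_+$ is carried by the diagonal, so its two marginals coincide: $\delta_+^x=\delta_+^y$. Chaining these equalities gives
\begin{equation*}
\delta_-^x=\delta_+^x=\delta_+^y=\delta_-^y,
\end{equation*}
which is exactly what we needed. Combining with the graph relation from the previous paragraph, for every Borel $A\subset[0,r]$ we get $\delta_-^x\bigl(t_r^{-1}(A)\bigr)=\delta_-^y(A)=\delta_-^x(A)$, and after multiplying by the normalizing constant $\alpha_-$ (which makes it a probability measure, since $\delta_-^x([0,r])=\delta_-([0,r]^2)=\alpha_-^{-1}$) we conclude that $\alpha_-\cdot\delta_-^x$ is $t_r$-invariant.

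**Where the work really sits.** None of the individual steps is deep, but the one that needs to be written carefully is the passage "$\delta_-\le\nu$ and $\nu$ sits on $\Gamma_r$, therefore $\delta_-^y=(t_r)_*\delta_-^x$." One must justify that a measure dominated by a measure supported on the graph $\{(x,t_r(x)):x\in[0,r]\}$ is itself supported there, and then that for such a measure the $y$-marginal is obtained from the $x$-marginal by transporting along $t_r$ — this is a disintegration/change-of-variables argument that should be spelled out (e.g. by testing against indicator functions of product sets, or by noting $\delta_-(C)=\delta_-^x(\{x: (x,t_r(x))\in C\})$ for Borel $C$). The analogous remark for $\delta_+$ on the diagonal is the easy case $t=\mathrm{id}$. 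Everything else is bookkeeping with the already-established marginal identities \eqref{nu-x,y-marg} and \eqref{delta+x-x} and Proposition \ref{d<m,d<n}.
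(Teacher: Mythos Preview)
Your proof is correct and follows essentially the same route as the paper: use $\delta_+\le\mu$ to place $\delta_+$ on the diagonal and deduce $\delta_+^x=\delta_+^y$, chain this with \eqref{delta+x-x} to get $\delta_-^x=\delta_-^y$, and then use $\delta_-\le\nu$ to place $\delta_-$ on $\Gamma_r$ and obtain the graph relation $\delta_-^y(A)=\delta_-^x\bigl(t_r^{-1}(A)\bigr)$. The only difference is the order in which you present the two support observations.
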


\begin{proof} Since $\delta_-$ is not a zero measure, we get
$$\delta_-([0,r]^2) \ = \  \delta_-^x([0,r])$$
and hence $\alpha_{-}\cdot \delta_-^x$ is a probability measure. Now, by Proposition \ref{d<m,d<n} we have
$$\delta_-\le \nu \ \ \ \ \ \ \mathrm{and} \ \ \ \ \ \ \delta_+\le \mu,$$
which leads to
\begin{equation} \label{supp(d-)} \mathrm{supp}(\delta_-) \ \subset \ \mathrm{supp}(\nu) \ = \ \Gamma_r  \end{equation}
and
\begin{equation} \label{supp(d+)}  \mathrm{supp}(\delta_+) \ \subset \ \mathrm{supp}(\mu) \ = \ \Delta_r, \end{equation}
respectively. Directly from (\ref{supp(d+)}), we see that $ \mathrm{supp}(\delta_+)$ is a subset of the main diagonal, and therefore
$\delta_+^x=\delta_+^y$. Consequently, due to (\ref{delta+x-x}),  we get the identities
\begin{equation} \label{d-x=d-y} \delta_-^x \ = \ \delta_+^x \ = \ \delta_+^y \ = \ \delta_-^y. \end{equation}
Finally, take an arbitrary Borel set $A\in \mathcal{B}([0,r])$. From (\ref{supp(d-)}) we know that $\mathrm{supp}(\delta_-)$ is contained 
in $\Gamma_r$, i.e. in the graph of function $t_r$, and so
\begin{equation}
\begin{split}
 \label{d-x(tr-1A)} \delta_-^y(A) & =  \delta_-([0,r] \times A) 
 = \delta_-\Big[ \big( [0,r]\times A \big) \cap \Gamma_r \Big]\\
 & = \ \delta_-^x \big( \{x\in [0,r]  : \ t_r(x)\in A \} \big) \ = \ \delta_-^x\left( t_r^{-1}(A) \right),
 \end{split}
\end{equation}
see Figure \ref{rys3}. Combining (\ref{d-x=d-y}) and (\ref{d-x(tr-1A)}),  we conclude that
$$\alpha_{-}\cdot \delta_-^x(A) \ \ =  \ \ \alpha_{-}\cdot \delta_-^y(A) \ \ =  \ \  \alpha_{-} \cdot \delta_-^x\left( t_r^{-1}(A) \right),$$
which is the claim.
\end{proof}

We will require the following folklore remark. For the sake of completeness, we also provide its straightforward proof.

\begin{rem} \label{ERGO-rem} Take $s>0$ and let $T:[0,s]\rightarrow [0,s]$ be a Borel measurable transformation.
Let $\pi$ and $\sigma$ be two $T$-invariant probability measures on $\mathcal{B}([0,s])$ and suppose that $\pi$ is additionally ergodic. 
If $\sigma \ll \pi$ (i.e., if $\sigma$ is absolutely continuous with respect to $\pi$), then $\sigma=\pi$.
 \end{rem}

\begin{proof} Let $\rho$ denote the Radon--Nikodym derivative of $\sigma$ with respect to $\pi$: for any $C\in \mathcal{B}([0,s])$
we have $\sigma(C) = \int_C \rho \ \mathrm{d}\pi$. Let us distinguish the set $A=\{\rho<1\}$. Because $\pi$ and $\sigma$ are $T$-invariant, we may write
\begin{equation} \label{pi-T-inv} \pi \left(A \setminus T^{-1}\left( A \right) \right) \ = \ \pi \left(T^{-1}\left( A\right) \setminus A \right),  \end{equation}
and
\begin{equation} \label{sig-T-inv} \sigma \left(A \setminus T^{-1}\left( A\right) \right) \ = \ \sigma \left(T^{-1}\left( A\right) \setminus A \right). \end{equation}
Hence, if $\pi (A \setminus T^{-1}( A) )>0$, then the very definition of $A$ we get
\begin{equation} \label{PI>int} \pi (A \setminus T^{-1}( A) ) \ > \ \int_{A \setminus T^{-1}(A)} \rho \ \mathrm{d}\pi \ = \  \sigma (A \setminus T^{-1}( A)). \end{equation}
Analogously, we show that
\begin{equation} \label{SIG=int} \sigma \left(T^{-1}\left( A\right) \setminus A \right) \ = \ \int_{T^{-1}\left( A\right) \setminus A} \rho \ \mathrm{d}\pi \ \ge \   \pi \left(T^{-1}\left( A\right) \setminus A \right). \end{equation}
Combining (\ref{sig-T-inv}), (\ref{PI>int}) and (\ref{SIG=int}), we obtain
$$ \pi \left(A \setminus T^{-1}\left( A \right) \right) \ > \ \pi \left(T^{-1}\left( A\right) \setminus A \right),$$
which contradicts (\ref{pi-T-inv}) and proves that $\pi (A \triangle T^{-1}( A))=0$.
Since measure $\pi$ is ergodic, we conclude that $\pi(A)\in \{0,1\}$. But $\pi(\{\rho<1\})=1$ is impossible as $\pi$ and $\sigma$ are both probabilistic.
Therefore we must have $\pi(\{\rho \ge1\})=1$. Again, since $\pi$ and $\sigma$ are probabilistic, we deduce that $\rho=1$ $\pi$-almost everywhere and thus $\sigma=\pi$.
\end{proof}

We proceed to the the primary objective of this section.

\begin{proof}[Proof of uniqueness] Let us assume that $\delta_-$ is not identically equal to zero. Then 
by Proposition \ref{d<m,d<n} and equality (\ref{nu^x=U(r)}), we know that
$$\delta_-^x\le \nu^x \ \ \ \ \ \ \mathrm{and} \ \ \ \ \ \ \nu^x = \frac{r}{c_r}\cdot \mathcal{U}(r),$$
and hence $\alpha_-\cdot \delta_-^x \ \ll \mathcal{U}(r)$. Moreover,  by Proposition \ref{a-d-tr-inv} and Corollary \ref{U(r)-tr-inv}, 
we note that  $\alpha_-\cdot \delta_-^x$ and $\mathcal{U}(r)$ are both $t_r$-invariant. But  $\mathcal{U}(r)$ is also ergodic, so Remark \ref{ERGO-rem} gives $\alpha_-\cdot \delta_-^x=\mathcal{U}(r)$. Combining this with (\ref{delta+x-x}) and Proposition \ref{d<m,d<n}, we get
\begin{equation} \label{U(r)<=mu-x} \alpha_-^{-1}\cdot \mathcal{U}(r) \ = \ \delta_-^x \ = \delta_+^x  \ \le \ \mu^x. \end{equation}
However, directly from (\ref{mur-defi}), for $s\in (0,s_0)$ we have
$$\mu^x\left([0,s] \right) \ = \  c_r^{-1}\int_{0}^s\frac{z}{1-z} \ \mathrm{d}z \ < \  \alpha_-^{-1}\cdot s, $$
provided $s_0$ is sufficiently small. This contradicts (\ref{U(r)<=mu-x}) and shows that $\delta_-$ is a zero measure. Consequently, we get $\delta=\delta_+$: $\delta$ is a nonnegative measure. But  (\ref{nu-x,y-marg}) gives $\delta^x=0$, so we must have $\delta=0$ and $\widetilde{\nu} = \nu$.  Finally, we write down the identity
$$\widetilde{\mu} \ = \ m - \widetilde{\nu} \ = \ m - \nu \ = \ \mu,$$
which gives $(\widetilde{\mu}, \widetilde{\nu})  =  (\mu, \nu)$. This proves that $(\mu, \nu)$ is a unique representation of $m$.\end{proof}

\section{Proof of minimality}

It remains to verify the minimality of the representation $(\mu, \nu)$. Suppose that
$$\widetilde{\mu}\le \mu \ \ \ \ \ \ \mathrm{and} \ \ \ \ \ \ \widetilde{\nu} \le \nu,$$
for some $(\widetilde{\mu}, \widetilde{\nu})\in \mathcal{R}$. We need to show that $(\widetilde{\mu}, \widetilde{\nu})$ is proportional to $(\mu, \nu)$.

\begin{proof}[Proof of minimality] Thanks to $\widetilde{\mu}\le \mu$, we have
\begin{equation}\label{support-tilde-mu} \mathrm{supp}(\widetilde{\mu}) \ \subset \ \mathrm{supp}(\mu) \ = \ \Delta_r.\end{equation}
In particular, $\mathrm{supp}(\widetilde{\mu})$ is a subset of the main diagonal, which implies that
\begin{equation} \label{tilde-mu-x=y} \widetilde{\mu}^x \ = \ \widetilde{\mu}^y. \end{equation}
Next, using (\ref{tilde-mu-x=y}) and Definition \ref{R-set}, we get
 \begin{equation} \label{R-set-tilde-mu} \begin{aligned} 
 \int_{A} x \ \mathrm{d}\widetilde{\nu}^x    \ =&   \   \int_{A}(1-x) \ \mathrm{d}\widetilde{\mu}^x    \\
=& \  \int_{A}(1-x) \ \mathrm{d}\widetilde{\mu}^y   \  =   \  \int_{A} x \ \mathrm{d}\widetilde{\nu}^y,
 \end{aligned} \end{equation}
for any Borel subset $A \in \mathcal{B}([0,r])$. Due to $\widetilde{\nu}\le \nu$, we have 
$$\widetilde{\nu}^x\ll \nu^x \ \ \ \ \ \ \mathrm{and} \ \ \ \ \ \ \widetilde{\nu}^y\ll \nu^y.$$
Furthermore, based on (\ref{nu^x=U(r)}), recall that $\nu^x=\nu^y$ and $\nu^x$ is proportional to $\mathcal{U}(r)$.
Applying the above observations, let $\varphi_x, \varphi_y$ denote the Radon--Nikodym derivatives of $\widetilde{\nu}^x$ and $\widetilde{\nu}^y$ with respect to $\nu^x$. Put
$$A_+ =  \left\{ \varphi_x>\varphi_y  \right\} \ \ \ \ \ \ \mathrm{and} \ \ \ \ \ \ A_- =  \left\{ \varphi_x<\varphi_y  \right\}.$$
Inserting sets $A_+, A_-$ into (\ref{R-set-tilde-mu}), we obtain
\begin{equation} \label{int-A+/-} \int_{A_{\pm}} x \left(\varphi_x - \varphi_y \right)  \mathrm{d}\nu^x  \ = \ \int_{A_{\pm}}x \ \mathrm{d}\widetilde{\nu}^x  \ - \ \int_{A_{\pm}}x  \ \mathrm{d}\widetilde{\nu}^y \ = \ 0,\end{equation}
which shows that $A_+\cup A_-$ is a set of $\nu^x$-measure zero and hence $\widetilde{\nu}^x=\widetilde{\nu}^y$.
On the other hand, we have
\begin{equation} \label{support-tilde-nu} \mathrm{supp}(\widetilde{\nu}) \ \subset \ \mathrm{supp}(\nu) \ = \ \Gamma_r, \end{equation}
so $\mathrm{supp}(\widetilde{\nu})$ is a subset of the graph of the function $t_r$.
Thus, we can simply repeat the reasoning from the proof  of Proposition \ref{a-d-tr-inv} -- rewriting the equation (\ref{d-x(tr-1A)}) with $\delta_-$ replaced by $\widetilde{\nu}$, we get that $\widetilde{\nu}$ is a  $t_r$-invariant  measure (up to proportionality). Then, again by Remark \ref{ERGO-rem}, 
\begin{equation} \label{tilde-nu-x-alpha*U} \widetilde{\nu}^x \ =\  \alpha \cdot \mathcal{U}(r) \ = \ \alpha' \cdot \nu^x,\end{equation}
 for some factors $\alpha, \alpha' \ge 0$. Combining (\ref{tilde-nu-x-alpha*U}) with Definition \ref{R-set}, we get
  \begin{equation} \label{R-set-tilde-mu-2} \begin{aligned} 
 \int_{A}(1-x) \ \mathrm{d}\widetilde{\mu}^x   \ =&   \   \int_{A} x \ \mathrm{d}\widetilde{\nu}^x     \\
=& \ \alpha' \cdot \int_{A} x \ \mathrm{d}\nu^x   \  =   \  \alpha'\cdot \int_{A}(1-x) \ \mathrm{d}\mu^x,
 \end{aligned} \end{equation}
for any Borel subset $A \in \mathcal{B}([0,r])$. Just as in (\ref{int-A+/-}), plugging the Radon--Nikodym derivative of $\widetilde{\mu}^x$
with respect to $\mu^x$ into (\ref{R-set-tilde-mu-2}), we check that $\widetilde{\mu}^x=\alpha'\cdot \mu^x$. Altogether with  (\ref{tilde-nu-x-alpha*U}), (\ref{support-tilde-mu}) and (\ref{support-tilde-nu}), this yields $(\widetilde{\mu}, \widetilde{\nu})=\alpha'\cdot(\mu, \nu)$. The proof of the minimality is hence complete.
\end{proof} 


\setlength{\baselineskip}{2ex}

\bibliographystyle{plain}
\bibliography{ERGObib}

\end{document}